\newtheorem{thm}{Theorem}[section]
\newtheorem{cor}[thm]{Corollary}
\theoremstyle{definition}
\newtheorem{dfn}[thm]{Definition}
\newtheorem{exa}{Example}
\newtheorem{rem}{Remark}
\begin{document}

\title{Representations of Leibniz algebras}

\author[A. Fialowski]{Alice Fialowski}
\author[\'E. Zs. Mih\'alka]{\'Eva Zsuzsanna Mih\'alka}

\address{Institute of Mathematics\\
E\"otv\"os Lor\'and University\\
 P\'azm\'any P\'eter s\'et\'any 1/C\\
1117 Budapest, Hungary}

\email{fialowsk@cs.elte.hu}
\email{mihalka@coulson.chem.elte.hu}

\subjclass[2000]{Primary 17A32, Secondary 17B10}

\keywords{Leibniz algebra, Lie algebra, semisimple algebra, Levi
decomposition, representation}

\begin{abstract}
In this paper we prove that every irredicuble representation of a
Leibniz algebra can be obtained from irreducible representations of the
semisimple Lie algebra from the Levi deconposition. We also prove that
- in general - for (semi)simple Leibniz algebras it is not true that
a representatiom can be decomposed to a direct sum of irreducible ones.
\end{abstract}

\maketitle

\section{Introduction}

The notion of Leibniz algebra was introduced by A.M. Bloh (\cite{Bloh1,
Bloh2} in the 1960-s, and later rediscovered and developed by J-L. Loday (\cite{Loday})
in 1993. Since then it became very popular, mainly because of its
applications in physics. A number of theorems for Lie algebras were
generalized for Leibniz algebras, like Lie's Theorem, Engel's Theorem,
Cartan's criterium, Levi's Theorem (\cite{Barnes1, Barnes3, Barnes4, O}), but
some other results do not hold for Leibniz algebras. Representations of
Leibniz algebras were introduced in \cite{LP}. Beside that, there is a recent
result on faithful representations (\cite{Barnes2}). 

In this paper we consider representations of semisimple Leibniz
algebras, and study, what can be carried over from the known Theorems
in the Lie case. We prove that every irreducible representation of a
Leibniz algebra $L$ can be obtained from irreducible representations of the
semisimple Lie algebra $S$ from the decomposition $L=S\dot{+}I$, where $I$ is 
the Leibniz kernel, $S$ is a
semisimple Lie algebra which is a subalgebra in $L$. We also prove that
for (semi)simple Leibniz algebras it is not true in general that a
representation can be decomposed to a direct sum of irreducible ones,
by giving a counterexample.

\section{Basic definitions}

For basic definitions and properties of Leibniz algebras we refer to
\cite{Loday, LP, AO, AAO}. 
\begin{dfn}

Let $K$ be a field. The algebra $(L,[\cdot,\cdot])$ is called a
\emph{Leibniz algebra} over $K$, if for every $x,y,z \in L$ we have the Leibniz
identity:

\centerline{$[[x,y],z]=[[x,z],y]+[x,[y,z]]$}

\end{dfn}
Obviously, every Lie algebra is a Leibniz algebra as well.

\begin{dfn}
We call a $K$-linear map $d: L\rightarrow L$ a \emph{derivation}, if

\centerline{$d[x,y]=[dx,y]+[x,dy]\:(x,y \in L).$}
\end{dfn}

Denote by $Der_{K}(L)$ the algebra of all derivations of $L$.

If we define the product as the bracket operation, $Der_{K}(L)$
becomes a Lie algebra. The Leibniz identity in $L$ means that for every
$x\in L$,  $r_x:=[.,x] \in Der_{K}(L)$ is an inner derivation.

\begin{thm}

$Inn(L):=\{r_x|x\in L\}$ forms a Lie algebra and $Inn(L)\triangleleft Der(L)$.

\end{thm}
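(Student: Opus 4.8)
The plan is to deduce everything from the single identity $[d, r_x] = r_{dx}$, valid for every $d \in Der_K(L)$ and every $x \in L$, where the bracket on derivations is the commutator $[d,e] = d\circ e - e\circ d$. Before that, I would record two preliminary points: (i) each $r_x$ really lies in $Der_K(L)$ --- this is precisely the content of the remark preceding the statement, obtained by instantiating the Leibniz identity at $(x,y,z)\mapsto(a,b,x)$ to get $[[a,b],x] = [[a,x],b] + [a,[b,x]]$, i.e. $r_x[a,b] = [r_x a,b] + [a,r_x b]$; and (ii) the assignment $x\mapsto r_x$ is $K$-linear, since $[\cdot,\cdot]$ is $K$-bilinear, so $r_{x+y} = r_x + r_y$ and $r_{\lambda x} = \lambda r_x$. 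Point (ii) already shows that $Inn(L)$ is a linear subspace of $Der_K(L)$.

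Next I would prove the key identity. For $d\in Der_K(L)$, $x\in L$ and any $a\in L$,
\[
[d, r_x](a) = d(r_x a) - r_x(d a) = d([a,x]) - [da,x] = [da,x] + [a,dx] - [da,x] = [a,dx] = r_{dx}(a),
\]
where the third equality uses that $d$ is a derivation. Hence $[d, r_x] = r_{dx}$.

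From this the theorem follows in two strokes. Taking $d = r_y$, which lies in $Der_K(L)$ by point (i), gives $[r_y, r_x] = r_{r_y x} = r_{[x,y]}$, so $[r_x, r_y] = -r_{[x,y]} = r_{-[x,y]} \in Inn(L)$; combined with $Inn(L)$ being a subspace, this shows $Inn(L)$ is a Lie subalgebra of the Lie algebra $Der_K(L)$. Taking $d$ to be an arbitrary derivation, $[d, r_x] = r_{dx} \in Inn(L)$, which is exactly the statement that $Inn(L)$ is an ideal of $Der_K(L)$; together with the previous point this yields $Inn(L)\triangleleft Der(L)$.

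There is no serious obstacle here: the argument is a short chain of direct applications of the Leibniz identity and the derivation property. The only thing to watch is the bookkeeping of signs and of the order of composition in the commutator --- in particular confirming that the single computation $[d, r_x] = r_{dx}$, specialized to $d = r_y$, genuinely delivers membership of $[r_x, r_y]$ in $Inn(L)$ (with the harmless sign $[r_x,r_y] = r_{-[x,y]}$), so that the subalgebra claim and the ideal claim need not be argued separately.
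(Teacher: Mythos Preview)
Your argument is correct and complete; the identity $[d,r_x]=r_{dx}$ is the standard device, and your specializations of it cleanly yield both the subalgebra and the ideal statements. Note that the paper itself states this theorem without proof, so there is no authorial argument to compare against; your write-up would serve perfectly well as the missing justification.
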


Because of this property, these Leibniz algebras are also called right
Leibniz algebras. If we define the Leibniz bracket, assuming that the
left multiplication should be a derivation of $L$, then we call such
algebras left Leibniz algebras. (Of course, one can state every
analogous
property for left Leibniz algebras as well.)

\begin{dfn}
The \emph{Leibniz kernel} of a Leibniz algebra $L$ is $I=span\{x^2 | x\in L\}$,
where $x^2=[x,x]$.
\end{dfn}

From $[x+y,x+y]\in I$ we get that for every $x,y\in L$ we have $[x,y]+[y,x]\in I$.
If $char(K)\neq 2$, then $[x,x]=\frac{1}{2}([x,x]+[x,x])$, so $span\{[x,y]+[y,x]|x,y\in L\}=I$.

\begin{thm}
The Leibniz kernel $I$ is a commutative subalgebra, it is also ideal in
$L$, and the factor algebra $L/I$ is a Lie algebra. This is the
smallest ideal in $L$ for which the factor is a Lie algebra. 
\end{thm}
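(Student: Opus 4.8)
The plan is to derive everything from one specialization of the Leibniz identity. Setting $z=y$ in $[[x,y],z]=[[x,z],y]+[x,[y,z]]$ gives $[[x,y],y]=[[x,y],y]+[x,[y,y]]$, hence $[x,[y,y]]=0$ for all $x,y\in L$. Since $I$ is \emph{spanned} by the squares $y^2=[y,y]$, bilinearity of the bracket upgrades this to $[x,a]=0$ for every $x\in L$ and every $a\in I$; that is, $[L,I]=0$. Everything else will follow quickly from this, together with the already-noted fact that $[u,v]+[v,u]\in I$ for all $u,v\in L$.

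From $[L,I]=0$, specializing the first argument to an element of $I$ shows $[a,b]=0$ for $a,b\in I$, so $I$ is a (commutative) subalgebra. For the ideal property, $[L,I]=0\subseteq I$ handles one side, and for the other we write, for $a\in I$ and $x\in L$, $[a,x]=\bigl([a,x]+[x,a]\bigr)-[x,a]$: the first term lies in $I$ by the noted fact and the second is $0$, so $[I,L]\subseteq I$ and $I$ is a two-sided ideal. For the quotient, $L/I$ is a Leibniz algebra because $I$ is an ideal, and $\overline{[x,x]}=\overline{x^2}=\overline 0$, so every square vanishes in $L/I$; from $\overline{(x+y)^2}=\overline 0$ this forces $[\bar x,\bar y]=-[\bar y,\bar x]$, and once the bracket is antisymmetric the Leibniz identity rearranges into the Jacobi identity, so $L/I$ is a Lie algebra.

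For minimality, let $J\triangleleft L$ with $L/J$ a Lie algebra; then $\overline{x^2}=\overline 0$ in $L/J$, i.e.\ $x^2\in J$ for every $x\in L$, hence $I=\mathrm{span}\{x^2\mid x\in L\}\subseteq J$. I do not anticipate a genuine obstacle here — each step is a line or two — but the point deserving care is keeping the computation characteristic-free: the antisymmetry of the induced bracket on $L/I$ should be obtained from $\overline{(x+y)^2}=\overline 0$ rather than by halving $[x,y]+[y,x]$, so that ``$L/I$ is a Lie algebra'' holds even when $\mathrm{char}\,K=2$; and one should be explicit that the passage from $[x,y^2]=0$ to $[x,a]=0$ for arbitrary $a\in I$ uses only linearity over the spanning set.
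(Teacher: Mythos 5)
Your proof is correct, and it is complete in a place where the paper itself offers no argument: this theorem is stated there as one of the basic properties of the Leibniz kernel, quoted from the literature, so there is no in-paper proof to match. Your route is the standard one, and each step checks out: specializing the Leibniz identity at $z=y$ gives $[x,[y,y]]=0$, and linearity over the spanning set $\{y^2\}$ yields $[L,I]=0$, which immediately gives commutativity of $I$ and the inclusion $[L,I]\subseteq I$; for the other half you correctly reuse the paper's own preceding observation that $[u,v]+[v,u]=(u+v)^2-u^2-v^2\in I$, writing $[a,x]=\bigl([a,x]+[x,a]\bigr)-[x,a]$ with the second term zero. Your handling of the quotient is also the right one in arbitrary characteristic: you get the \emph{alternating} property $\overline{[x,x]}=\overline{0}$ directly from $x^2\in I$ (not merely antisymmetry), and the expansion of $\overline{(x+y)^2}$ avoids dividing by $2$, so the conclusion that $L/I$ is a Lie algebra survives $\mathrm{char}\,K=2$, which is exactly the point the paper's remark (which does divide by $2$) sidesteps. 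The minimality argument, $x^2\in J$ whenever $L/J$ is Lie, is the expected one. No gaps.
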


For Lie algebras the square of every element is $0$, so $I=0$.

Note that if $dim(L)\geq 1$, we have $I\neq L$ for the Leibniz kernel
$I$. 

If $L$ is not a Lie algebra ($I\neq 0$), then $I$ is a nontrivial ideal
in $L$, so the definition of simplicity for Lie algebras can not be
applied for Leibniz algebras. Instead, the definition is modified as
follows:

\begin{dfn}
The Leibniz algebra $L$ is \emph{simple}, if $[L,L]\neq I$ and it only
has the following three ideals: $0,I,L$. (Here $0$ and $I$ are not
necessarily different.)
\end{dfn}
\begin{rem}
As for Lie algebras $I=0$, the new definition of simplicity
coincides with the old one.
\end{rem}

If $L$ is a simple Leibniz algebra, then $L/I$ is simple Lie algebra,
but the opposite is not true.

\begin{dfn}
For a Leibniz algebra $(L,[\cdot,\cdot])$ , define the composition
chains of ideals 
\begin{multicols}{2}
$L^1=L,\:L^{k+1}=[L^k,L],\ k\geq1,$

$L^{[1]}=L,\:L^{[n+1]}=[L^{[n]},L^{[n]}],\ n\geq1$,
\end{multicols}
\end{dfn}

\begin{dfn}
The Leibniz algebra $L$ is \emph{solvable}, if there exists an integer
$n\geq 1$ such that $L^{[n]}=0$, and $L$ is \emph{nilpotent}, if there exists an
integer $k\geq 1$ such that $L^{k}=0$.
\end{dfn}

\begin{thm}
For positive integers $i,j$ we have $[L^i,L^j]\subseteq L^{i+j}$. From
this it follows that for every $k\geq 2$,  $L^{[k]}\subseteq
L^{2^{k-1}}$, so every nilpotent Leibniz algebra is solvable.
\end{thm}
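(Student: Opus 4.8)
The plan is to prove the two inclusions in order. First I would establish the relation $[L^i,L^j]\subseteq L^{i+j}$ by induction on $j$ for fixed $i$. The base case $j=1$ is just the definition $L^{i+1}=[L^i,L]$. For the inductive step, I would use the Leibniz identity in the form $[[x,y],z]=[[x,z],y]+[x,[y,z]]$: taking $x\in L^{i}$ (as a product, so writing a general element of $L^i$ — actually $L^i$ is spanned by nested brackets, so one must be a little careful and argue on spanning elements), $y\in L^{j-1}$ wait — the cleaner route is to note $L^{j} = [L^{j-1},L]$, so a spanning element of $[L^i,L^j]$ has the form $[a,[b,c]]$ with $a\in L^i$, $b\in L^{j-1}$, $c\in L$. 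Then the Leibniz identity rearranges $[a,[b,c]] = [[a,b],c] - [[a,c],b]$, and by the induction hypothesis $[a,b]\in[L^i,L^{j-1}]\subseteq L^{i+j-1}$ so $[[a,b],c]\in L^{i+j}$, while $[a,c]\in L^{i+1}$ so $[[a,c],b]\in[L^{i+1},L^{j-1}]\subseteq L^{i+j}$ again by the hypothesis. Hence $[L^i,L^j]\subseteq L^{i+j}$.

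Next I would deduce $L^{[k]}\subseteq L^{2^{k-1}}$ by induction on $k$. For $k=2$: $L^{[2]}=[L,L]=L^2=L^{2^{1}}$, so equality holds. Assuming $L^{[k]}\subseteq L^{2^{k-1}}$, we get
\[
L^{[k+1]}=[L^{[k]},L^{[k]}]\subseteq[L^{2^{k-1}},L^{2^{k-1}}]\subseteq L^{2^{k-1}+2^{k-1}}=L^{2^{k}},
\]
using the first part with $i=j=2^{k-1}$ and monotonicity of the lower central series (which is immediate since $L^{m+1}=[L^m,L]\subseteq L^m$). This completes the induction.

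Finally, for the last assertion: if $L$ is nilpotent then $L^k=0$ for some $k$; taking any $n$ with $2^{n-1}\geq k$ (e.g. $n=k$) gives $L^{[n]}\subseteq L^{2^{n-1}}\subseteq L^{k}=0$, so $L$ is solvable.

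I do not expect a serious obstacle here; the only place requiring mild care is the first induction, where one must phrase the argument on spanning elements of $L^i$ and $L^j$ rather than on "the" element, and keep straight that the Leibniz identity is being used with the bracket in the stated (right) convention. Everything else is a routine double induction.
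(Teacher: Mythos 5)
Your argument is correct in substance, and it is worth noting that the paper itself states this theorem without proof (it is quoted as a known fact), so your write-up supplies exactly the standard argument that is omitted. One small repair is needed in the first induction: you announce ``induction on $j$ for fixed $i$,'' but in the inductive step you invoke the hypothesis twice, once for the pair $(i,j-1)$ and once for the pair $(i+1,j-1)$; the second invocation is not available if $i$ is genuinely fixed. The fix is routine: either run the induction on $j$ with the statement quantified over \emph{all} $i$ (``for every $i\geq 1$, $[L^i,L^{j-1}]\subseteq L^{i+j-1}$''), or induct on $i+j$. With that rephrasing, your computation $[a,[b,c]]=[[a,b],c]-[[a,c],b]$ on spanning elements, the deduction $L^{[k]}\subseteq L^{2^{k-1}}$ by induction on $k$ using $i=j=2^{k-1}$ together with the trivial monotonicity $L^{m+1}\subseteq L^m$, and the final step from nilpotency to solvability are all sound.
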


For the Leibniz kernel $I$ of a Leibniz algebra, $I^{[2]}=[I,I]=0$, so
$I$ is solvable.\\
If $L$ is simple, then $I\neq [L,L]$. But $[L,L]$ is an ideal in $L$,
and $I\subseteq [L,L]$ for every Leibniz algebra. We get that if $L$ is
simple, then the only possibility is $[L,L]=L$. From this it also
follows that $L^{k}=L^{[k]}=L\ (k\geq 1)$, so $L$ is neither nilpotent nor solvable.

\begin{cor}
If $L$ is finite dimensional, it has a maximal solvable ideal $R$,
which we call the \emph{radical} of $L$. Also, there exists a maximal
nilpotent ideal, containing every nilpotent ideal. We call this the
\emph{nilradical} of $L$, and denote it by $N$. Clearly, $N\leq R$.
\end{cor}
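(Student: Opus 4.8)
The plan is to reduce the statement to two closure properties — (i) the sum of two solvable ideals of $L$ is again a solvable ideal, and (ii) the sum of two nilpotent ideals of $L$ is again a nilpotent ideal — and then to exploit finite‑dimensionality. Granting (i): choose a solvable ideal $R$ of $L$ of maximal dimension; for an arbitrary solvable ideal $J$ the ideal $R+J$ is solvable and contains $R$, so $\dim(R+J)=\dim R$ and hence $J\subseteq R$. Thus $R$ contains every solvable ideal, so it is the unique maximal one, i.e. the radical. The same argument with (ii) in place of (i) produces the nilradical $N$. Finally, by the Theorem above a nilpotent Leibniz algebra is solvable, so $N$ is a solvable ideal and therefore $N\subseteq R$, which is the asserted inclusion $N\leq R$.

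For (i): if $A,B$ are solvable ideals, then $A+B$ is an ideal of $L$ (a sum of ideals is an ideal), $B$ is an ideal of $A+B$, and $(A+B)/B\cong A/(A\cap B)$ is a homomorphic image of the solvable algebra $A$, hence solvable; so $(A+B)^{[n]}\subseteq B$ for some $n$. Since $B^{[m]}=0$ for some $m$, a one‑line induction (from $(A+B)^{[n]}\subseteq B$ one gets $(A+B)^{[n+k]}\subseteq B^{[k+1]}$) yields $(A+B)^{[n+m-1]}=0$. This part is routine.

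The substance is (ii), and this is where I expect the only real difficulty. Let $A,B$ be ideals with $A^{p}=0$ and $B^{q}=0$ for the lower central chain $A^{1}=A,\ A^{k+1}=[A^{k},A]$, and likewise for $B$. First I would record, by induction on $k$ using the Leibniz identity, that $[A^{k},L]\subseteq A^{k}$ for all $k$ (for $k=1$ this is the ideal property; the inductive step expands $[[A^{k-1},A],L]$ through $[[x,y],z]=[[x,z],y]+[x,[y,z]]$). Now $(A+B)^{t}$ is spanned by left‑normed brackets $[\cdots[[u_{1},u_{2}],u_{3}]\cdots,u_{t}]$ with every $u_{i}\in A\cup B$. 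Reading such a bracket from the left and monitoring how deep the running partial product lies in the chain $A\supseteq A^{2}\supseteq\cdots$, one checks that each factor drawn from $A$ pushes the product one step deeper (using $[L,A]\subseteq A$ for the first such factor and $[A^{j},A]=A^{j+1}$ afterwards), while each factor drawn from outside $A$ leaves the depth unchanged — this last point is exactly where $[A^{j},L]\subseteq A^{j}$ is needed. Consequently a bracket with at least $p$ of its factors in $A$ lies in $A^{p}=0$, and symmetrically for $B$. Every bracket of length $p+q-1$ has, by the pigeonhole principle, at least $p$ factors in $A$ or at least $q$ in $B$; hence $(A+B)^{p+q-1}=0$, so $A+B$ is a nilpotent ideal.

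The point to watch throughout (ii) is that $L$ is not antisymmetric, so factors cannot be freely reordered; the depth‑tracking argument goes through only because $A$ is a \emph{two‑sided} ideal and because the terms $A^{k}$ of its lower central chain remain stable under right multiplication by all of $L$ — both facts coming from the Leibniz identity. With those two stability statements established, everything else is bookkeeping with the two composition chains introduced above.
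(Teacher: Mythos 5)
Your argument is correct, and it is worth noting that the paper itself offers no proof at all: the corollary is simply asserted (it is standard in the Leibniz literature, following as in the Lie case), so there is no ``paper proof'' to match. Your route is the natural one and it is complete: maximal-dimension choice plus closure of solvable (resp.\ nilpotent) ideals under sums gives uniqueness and the containment of all solvable (resp.\ nilpotent) ideals, and $N\leq R$ then follows from the paper's earlier theorem that nilpotent implies solvable. Part (i) is indeed routine, resting only on the isomorphism theorems for two-sided ideals and the fact that the derived series of a quotient is the image of the derived series. The genuinely Leibniz-specific content is exactly where you put it: since brackets cannot be reordered, one needs the stability $[A^{k},L]\subseteq A^{k}$ of the lower central terms under right multiplication, proved by induction from $[[x,y],z]=[[x,z],y]+[x,[y,z]]$, together with the left-ideal property $[L,A]\subseteq A$ to enter the chain at the first $A$-factor; with these, the depth-tracking and pigeonhole argument correctly yields $(A+B)^{p+q-1}=0$. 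One small imprecision: you say both stability facts ``come from the Leibniz identity,'' but the two-sidedness of $A$ is part of the (standard) definition of ideal being used here, not a consequence of the identity; only the stability of the $A^{k}$ under right multiplication by $L$ is derived from it. That quibble aside, the proposal would serve as a complete proof of the statement the paper leaves unproved.
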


\begin{dfn}
A Leibniz algebra $L$ is \emph{semisimple}, if its maximal solvable
ideal is $I$.
\end{dfn}

Obviously, in every case $I\triangleleft R$. So if $L$ is simple, $R=I$
or $R=L$. Because of the solvability of $R$, $[R,R]\neq R$, which means
$[R,R]\subset R$ is a proper ideal. If $R=L$, then $I\subseteq
[L,L]=[R,R]\subset R=L$. In $L$, except $0,I,L$ there are no other
ideals, so we would get $[L,L]=I$, which contradicts the simplicity of
$L$. So for $L$ we have $R=I$, which means that from simplicity it
follows semisimplicity.

If $L$ is a Lie algebra, again we get that the two definitions of
semisimplicity coincide.

A Leibniz algebra $L$ is semisimple if and only if the
factor algebra $L/I$ is a semisimple Lie algebra.

\begin{dfn}
Let $L$ be a Leibniz algebra, $M$ vector space over the field $K$.
Assume we have two $K$-linear functions:

$\lambda : L\rightarrow \mathfrak{gl}(M)$ and

$\rho : L\rightarrow \mathfrak{gl}(M)$.

Denote $\lambda(x)$ and $\rho(y)$ by $\lambda_x$ and
 $\rho_y$  for every $x,y \in L$. We say that $M$ is a {\it
 representation} of
 $L$ if the following properties are satisfied:

$(1)$ $\rho_{[x,y]}=\rho_y \rho_x - \rho_x \rho_y$,

$(2)$ $\lambda_{[x,y]}=\rho_y \lambda_x - \lambda_x \rho_y$,

$(3)$ $\lambda_{[x,y]}=\rho_y \lambda_x + \lambda_x \lambda_y$, for
every $x,y\in L$.
\end{dfn}

If $M$ is a representation of $L$, then $M$ becomes an $L$-module with
the following $[\cdot,\cdot]: M\times L\rightarrow M$ and
$[\cdot,\cdot]: L\times M\rightarrow M$ products: 
$[m,x]:=\rho_x(m)$ and
$[x,m]:=\lambda_x(m)$ for every $x\in L, m\in M$.

Conversely, for a given $L$-module $M$, we get the representation
$\rho: L\rightarrow \mathfrak{gl}(M)$, $\lambda: L\rightarrow
\mathfrak{gl}(M)$ with

$\rho_x:=[\cdot,x]\ \forall x\in L$,
$\lambda_x:=[x,\cdot]\ \forall x\in L$.\\

Let $L$ be a Leibniz-algebra, $M$ vector space over $K$, and $\rho
,\lambda :L\rightarrow \mathfrak{gl}(M)$ a representation of $L$ ($M$ is
an $L$-bimodule). Denote for $x\in L$,  $ \rho(x)=[.,x]\in
\mathfrak{gl}(M)$ and $\lambda(x)=[x,.] \in \mathfrak{gl}(M)$ the
right and left multiplication by $x$.

\begin{dfn}
Let $\rho_1,\lambda_1:L\rightarrow
\mathfrak{gl}(M_1)$ and $\rho_2,\lambda_2:L\rightarrow
\mathfrak{gl}(M_2)$ be two Leibniz representations. These two
representations are \emph{equivalent}, if
there exists an isomorphism $\varphi:M_1\rightarrow M_2$ such that $\varphi\circ\rho_1(x)=\rho_2(x)\circ\varphi$
$(\forall x\in L)$, and
$\varphi\circ\lambda_1(x)=\lambda_2(x)\circ\varphi$ $(\forall x\in L)$.
\end{dfn}

Define now the map $\psi:\mathfrak{gl}(M_1)\rightarrow \mathfrak{gl}(M_2)$
 as follows: for $f\in\mathfrak{gl}(M_1)$ let $\psi(f):=\varphi\circ
f\circ \varphi^{-1}$. We get the isomorphism
$\mathfrak{gl}(M_1)\rightarrow \mathfrak{gl}(M_2)$ with the
property that the two diagrams commute:

\begin{multicols}{2}
\[
\begin{tikzcd}
L \arrow{r}{\rho_1} \arrow[swap]{dr}{\rho_2} & \mathfrak{gl}(M_1) \arrow{d}{\psi} \\
& \mathfrak{gl}(M_2)
\end{tikzcd}
\]

\[
\begin{tikzcd}
L \arrow{r}{\lambda_1} \arrow[swap]{dr}{\lambda_2} & \mathfrak{gl}(M_1) \arrow{d}{\psi} \\
& \mathfrak{gl}(M_2)
\end{tikzcd}
\]
\end{multicols}

If $L$ is a 
Lie algebra, then its Lie representation $\varphi_1:L\rightarrow \mathfrak{gl}(M_1)$
 becomes a Leibniz-reprezentation with
$\lambda:=\varphi_1$ and $\rho:=-\varphi_1$. (One can also take the
choice
$\lambda:=0$ and $\rho:=-\varphi_1$.)

If the Lie algebra $L$ has two equivalent Lie representations,
$\varphi_1,\varphi_2$ (with the isomorphism $\varphi:M_1\rightarrow M_2$), 
then, using the above method to form Leibniz representations, they
will be equivalent as well (with the isomorphism $\varphi$).

Let $L$ be a Leibniz algebra and
$\rho_i,\lambda_i:L\rightarrow\mathfrak{gl}(M_i)$ $(i=1,2)$ two
Leibniz-representations of $L$. Assume that these representations are
isomorphic (either via
$\psi:\mathfrak{gl}(M_1)\rightarrow\mathfrak{gl}(M_2)$, or via the
isomorphisms $\varphi: M_1\rightarrow M_2$). For a given  $x\in L$
assume that $0\neq v\in M_1$ is an eigenvector of the map $\rho_1(x)$
for the eigenvalue $\alpha$, $\rho_1(x)(v)=\alpha v$. Then
$\varphi(v)\neq 0$ and
$\rho_2(x)(\varphi(v))=\varphi(\rho_1(x)(v))=\varphi(\alpha
v)=\alpha\varphi(v)$, so $\alpha$ is an eigenvalue of $\rho_2(x)$, and $\varphi(v)$ is an eigenvector for $\alpha$.

\begin{dfn}
Let $K$ be a field, $L$ a Leibniz algebra over $K$,
and $V$ a vector space over $K$. We say that the Leibniz
representation $\rho,\lambda :L\rightarrow \mathfrak{gl}(V)$ is
\emph{irreducible}, if $\rho$ and $\lambda$ are irreducible. In other words, if $U\leq V$ is
an invariant subspace of $\rho$ and $\lambda$ ($\forall x\in L$,
$\rho_x(U)\subseteq U$ and $\lambda_x(U)\subseteq U$), then $U=0$ or $U=V$.
\end{dfn}

\begin{dfn}
We say that the representation $\rho,\lambda :L\rightarrow \mathfrak{gl}(V)$ is
 the \emph{direct
sum} of
lower dimensional ones, if $V$ can be written as the direct sum of
$\rho$- and $\lambda$-invariant subspaces $V=V_1\oplus V_2\oplus \dots
\oplus V_k$ such that for every  $x\in L$, $\rho_x(V_i)\subseteq V_i$
and $\lambda_x(V_i)\subseteq V_i$ ($i=1,\dots,k$).
\end{dfn}


If a Leibniz algebra is not semisimple, then already in the
2-dimensional case the study of its representations is very
complicated.

Let us now deal with the semisimple case, and study what can be carried
over from the statements for Lie algebras to Leibniz algebras.

\section{Representations of semisimple Leibniz algebras}

\begin{thm}\label{irred_lie}
Let $L$ be a Leibniz algebra, $M$ vector space and $\lambda, \rho:
L\rightarrow \mathfrak{gl}(M)$ representation of $L$. Assume that
the representation is irreducible, so $M$ does not have any proper nontrivial
subspace, invariant for $\rho(x)$ and $\lambda(x)$ ($x\in L$). Then a Leibniz
representation can be essentially viewed as Lie representation. We mean
that either $\rho+\lambda=0$ (multiplication by $x$ is anticommutative
on $M$), or $\lambda=0$ (and so defining $\varphi:=-\rho$ we get a Lie
representation).
\end{thm}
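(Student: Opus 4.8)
The plan is to extract structural identities from the three representation axioms by combining them and then exploit irreducibility through Schur-type arguments. First I would subtract axiom $(2)$ from axiom $(3)$: since both equal $\lambda_{[x,y]}$, we get $\lambda_x\lambda_y + \lambda_x\rho_y = 0$, i.e. $\lambda_x(\lambda_y+\rho_y)=0$ for all $x,y\in L$. Set $\mu_y:=\lambda_y+\rho_y$. So $\lambda_x\mu_y=0$ for all $x,y$. This already says that the image of every $\mu_y$ lies in the common kernel $\bigcap_{x}\ker\lambda_x$, and the image of every $\lambda_x$ is annihilated on the left by every $\lambda$. The subspace $W:=\sum_x \operatorname{im}\lambda_x$ and the subspace $W':=\bigcap_x\ker\lambda_x$ are both natural candidates for invariant subspaces; the crux is to check they are invariant under both $\rho$ and $\lambda$ and then invoke irreducibility to force one of them to be $0$ or $M$.

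Next I would check invariance. For $\lambda$-invariance of $W=\sum_x\operatorname{im}\lambda_x$ this is immediate by definition. For $\rho$-invariance I would use axiom $(2)$ rewritten as $\lambda_x\rho_y = \rho_y\lambda_x - \lambda_{[x,y]}$, so $\rho_y(\operatorname{im}\lambda_x)\subseteq \operatorname{im}\lambda_x + \operatorname{im}\lambda_{[x,y]}\subseteq W$; hence $W$ is $\rho$-invariant as well. By irreducibility, either $W=0$ or $W=M$. If $W=0$ then $\lambda=0$, which is one of the two desired conclusions (and then $\varphi:=-\rho$ is a Lie representation by axiom $(1)$). So assume $W=M$, i.e. the $\operatorname{im}\lambda_x$ span $M$. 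Combined with $\lambda_x\mu_y=0$ for all $x$, acting on the spanning set gives $\mu_y=0$ on all of $M$ once we know $\mu_y$ kills each $\operatorname{im}\lambda_z$ — but $\mu_y\lambda_z$ need not vanish a priori, so here I must be more careful: from $\lambda_z\mu_y=0$ we only get $\operatorname{im}\mu_y\subseteq\bigcap_z\ker\lambda_z = W'$. Instead I would argue on $W'$: if $W=M$ then I claim $W'=0$. Indeed $W'$ is $\lambda$-invariant trivially; for $\rho$-invariance use axiom $(2)$ again: if $v\in W'$ then $\lambda_x\rho_y v = \rho_y\lambda_x v - \lambda_{[x,y]}v = 0$, so $\rho_y v\in W'$. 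By irreducibility $W'=0$ or $W'=M$; the latter again gives $\lambda=0$, so suppose $W'=0$. But $\operatorname{im}\mu_y\subseteq W'=0$ forces $\mu_y=0$, i.e. $\rho+\lambda=0$, the second desired conclusion.

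The one remaining gap is the degenerate logical branch where neither $W=0$ nor $W'=0$ fails to occur — but the argument above shows $W\in\{0,M\}$ and $W'\in\{0,M\}$ separately, and $W=0\Rightarrow\lambda=0$ while $W'=0\Rightarrow\lambda=-\rho$; the only case needing attention is $W=M$ and $W'=M$ simultaneously, which forces $\lambda=0$ anyway (from $W'=M$). So the dichotomy is complete. The main obstacle I anticipate is bookkeeping: making sure the subspaces $W$ and $W'$ are genuinely invariant under \emph{both} $\rho$ and $\lambda$ (the definition of irreducible representation in this paper demands invariance under both), and making sure I have not implicitly used $\operatorname{char}K\ne 2$ or finite-dimensionality where it is not warranted. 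A secondary subtlety is that axiom $(1)$ is needed only at the very end to confirm that $\varphi:=-\rho$ satisfies the Lie representation identity $\varphi_{[x,y]}=[\varphi_x,\varphi_y]$, which follows by rewriting $(1)$ as $\rho_{[x,y]}=-(\rho_x\rho_y-\rho_y\rho_x)=-[\rho_x,\rho_y]$, hence $\varphi_{[x,y]}=-\rho_{[x,y]}=[\rho_x,\rho_y]=[-\varphi_x,-\varphi_y]=[\varphi_x,\varphi_y]$.
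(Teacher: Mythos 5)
Your proof is correct, and it runs on the same engine as the paper's: subtracting axiom $(2)$ from axiom $(3)$ to get $\lambda_x(\lambda_y+\rho_y)=0$, and then letting irreducibility decide the fate of an invariant subspace. The difference is which subspace you test. The paper takes $V:=\operatorname{span}\{[y,m]+[m,y]\}=\sum_y\operatorname{im}(\lambda_y+\rho_y)$, shows $V\subseteq\bigcap_x\ker\lambda_x$ and that $V$ is $\rho$- and $\lambda$-invariant (the $\rho$-invariance via a short direct bracket computation), and concludes in one dichotomy: $V=0$ gives $\lambda+\rho=0$, while $V=M$ forces $\bigcap_x\ker\lambda_x=M$, i.e.\ $\lambda=0$. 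You instead apply irreducibility to the other end of the same containment, $W':=\bigcap_x\ker\lambda_x$, whose invariance under both $\rho$ and $\lambda$ follows in one line from axiom $(2)$; then $W'=M$ gives $\lambda=0$ and $W'=0$ gives $\operatorname{im}(\lambda_y+\rho_y)\subseteq W'=0$, i.e.\ $\lambda=-\rho$. Note that this makes your first branch with $W:=\sum_x\operatorname{im}\lambda_x$ entirely redundant: the $W'$ dichotomy alone already covers both conclusions, so you could delete the $W$ discussion and shorten the argument. Your closing verification that $\varphi:=-\rho$ satisfies the Lie identity via axiom $(1)$ matches the paper, and you are right that neither $\operatorname{char}K\neq 2$ nor finite-dimensionality is needed anywhere.
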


\begin{proof}
Let $V:=span\{[y,m]+[m,y]|y\in L, m\in M\}$. Then

\centerline{$0\leq V\leq \displaystyle\bigcap_{x\in L}Ker([x,.])\leq M$,}
and $V$ is invariant subspace for $\rho$ and $\lambda$. The first inclusion is
obvious, for the second let $x,y\in L, m\in M$ be arbitrary, then using
the properties of Leibniz representation, we have
$[x,[y,m]+[m,y]]=[[x,y],m]-[[x,m],y]+[[x,m],y]-[[x,y],m]=0$ and we
proved the second inclusion. This also proves the invariance for $\lambda$.

The invariant property for $\rho$ is also clear: let again $x,y\in L, m\in M$
arbitrary.\\
$[[y,m]+[m,y],x]=[[y,x],m]+[y,[m,x]]+[[m,x],y]+[m,[y,x]]=([[y,x],m]+[m,[y,x]])+([y,[m,x]]+[[m,x],y])\in
V$, because $[y,x]\in L$ and $[m,x]\in M$.

We showed that indeed $V$ is an invariant subspace for $\rho$ and $\lambda$. On the
other hand, we assumed that the representation is irreducible, so there are two
possible cases:
\begin{enumerate}
\item $V=0$. Then for a given $y\in L$, $[y,m]+[m,y]=0$ $\forall m\in
M$, so $\lambda(y)+\rho(y)=0$. As $y$ was arbitrary, we get $\lambda+\rho=0$.

\item $V=M$. Then $M=V=\displaystyle\bigcap_{x\in L}Ker([x,.])= M$, so
$\forall x\in L$,  $Ker([x,.])=M$. But this exactly means that for
every
$x\in L$, $\lambda(x)=0$, in other words, $\lambda=0$.\\
Then by the properties of a Leibniz representation\\
($\rho([x,y])=\rho(y)\rho(x)-\rho(x)\rho(y)$), for the linear map
$\varphi:=-\rho$, $\varphi([x,y])=[\varphi(x),\varphi(y)]$, so
$\varphi$ is a Lie-representation.
\end{enumerate}
\end{proof}

\begin{cor}
The existence of an invariant subspace depends on the irreducibility of
$\rho$, so a Leibniz representation $(\rho,\lambda)$ is irreducible exactly
when $\rho$ is irreducible.
\end{cor}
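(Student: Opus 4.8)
The plan is to prove the two implications of the equivalence separately; the reverse one is immediate from the definition, and the forward one reduces to Theorem~\ref{irred_lie}.

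\emph{Reverse implication ($\rho$ irreducible $\Rightarrow$ $(\rho,\lambda)$ irreducible).} I would simply note that if $U\le V$ is invariant for both $\rho$ and $\lambda$, then in particular $\rho_x(U)\subseteq U$ for every $x\in L$, so the irreducibility of $\rho$ forces $U=0$ or $U=V$. Thus $V$ has no proper nontrivial common invariant subspace, which is exactly the irreducibility of $(\rho,\lambda)$. No use of the Leibniz axioms is needed here.

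\emph{Forward implication ($(\rho,\lambda)$ irreducible $\Rightarrow$ $\rho$ irreducible).} Here I would invoke Theorem~\ref{irred_lie}: since $(\rho,\lambda)$ is irreducible, either $\lambda=0$ or $\rho+\lambda=0$. The point is that in both cases every $\rho$-invariant subspace $U$ is automatically $\lambda$-invariant. If $\lambda=0$ this is trivial, as $\lambda_x(U)=\{0\}\subseteq U$. If $\rho+\lambda=0$, then $\lambda_x=-\rho_x$, and since $U$ is a linear subspace we have $\lambda_x(U)=-\rho_x(U)=\rho_x(U)$, so $\rho_x(U)\subseteq U$ gives $\lambda_x(U)\subseteq U$. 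Hence a proper nontrivial $\rho$-invariant subspace would be a proper nontrivial common invariant subspace, contradicting the irreducibility of $(\rho,\lambda)$; so $\rho$ must be irreducible.

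I do not anticipate a genuine obstacle: once Theorem~\ref{irred_lie} has cut the problem down to the cases $\lambda=0$ and $\lambda=-\rho$, both directions are one-liners. The only things to watch are the set-level identity $-\rho_x(U)=\rho_x(U)$ for a subspace $U$ (which is what makes the anticommutative case collapse onto the ordinary notion of irreducibility) and the fact that ``$(\rho,\lambda)$ irreducible'' is defined through \emph{common} invariant subspaces, so the reverse implication really is trivial. Finally, I would remark that the criterion is unavoidably asymmetric in $\rho$ and $\lambda$: when $\lambda=0$ and $\dim V\ge 2$ the map $\lambda$ is reducible while $(\rho,\lambda)$ can still be irreducible, which is why the statement singles out $\rho$.
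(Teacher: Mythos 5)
Your proof is correct and follows exactly the route the paper intends: the reverse implication is immediate from the definition of a common invariant subspace, and the forward implication uses Theorem~\ref{irred_lie} to reduce to the cases $\lambda=0$ and $\lambda=-\rho$, in which every $\rho$-invariant subspace is automatically $\lambda$-invariant. Nothing essential differs from the paper's (implicit) argument.
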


As a consequence, for a Leibniz representation not being a Lie
representation in the above sense, it is necessary that $\rho$ is not
irreducible.

On the other hand, it is easy to show that for a representation not being Lie, it is not
sufficient that $\rho$ is not irreducible. If $dim(M)\geq 2$, then with
the choice $\rho=\lambda=0$ we clearly get a Lie representation, but
because of $\rho=0$, every subspace is invariant, and becuase of
$dim(M)\geq 2$, there exists a proper nontrivial subspace.

We know the following:

\begin{thm}[Theorem Levi] \cite{Barnes1}
Let $L$ be a finite dimensional Leibniz algebra over $K$, where
$char(K)=0$. Let $R\triangleleft L$ its solvable radical. Then there
exists a semisimple subalgebra $S\leq L$ such that $L=S+R$ and $S\cap
R=0$, so $L=S\dot{+}R$. This subalgebra $S$ is a semisimple Lie
algebra.
\end{thm}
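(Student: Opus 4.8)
The plan is to reduce the theorem to the classical Levi theorem for Lie algebras together with a vanishing result for second Leibniz cohomology of semisimple Lie algebras (we use $\mathrm{char}\,K=0$ throughout). Let $\pi\colon L\to L/I$ be the projection onto the Lie algebra $L/I$, where $I$ is the Leibniz kernel. Since $[I,I]=0$, $I$ is a solvable ideal, so $I\subseteq R$ and $R/I$ is the solvable radical of $L/I$. By the classical Levi theorem there is a semisimple Lie subalgebra $\bar S\le L/I$ with $L/I=\bar S\,\dot{+}\,R/I$. Set $T:=\pi^{-1}(\bar S)$, a subalgebra of $L$ with $I\le T$, and one checks $T+R=L$ and $T\cap R=I$. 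Hence it suffices to produce a subalgebra $S\le T$ with $T=S\,\dot{+}\,I$: then $L=T+R=S+R$, $S\cap R=S\cap(T\cap R)=S\cap I=0$, and $S\cong T/I=\bar S$ is a semisimple Lie algebra, which is exactly the assertion.

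Next I would identify the remaining problem as splitting an abelian extension. The Leibniz identity, with the substitution $(x,y,z)\mapsto(z,x,x)$, yields $[[z,x],x]=[[z,x],x]+[z,x^2]$, hence $[z,x^2]=0$, and therefore $[z,i]=0$ for every $z\in L$ and $i\in I$ because $I$ is spanned by squares; in particular $[T,I]=0$. Thus in the extension $0\to I\to T\xrightarrow{\pi}\bar S\to 0$ the bimodule $I$ has trivial left action. Since also $[j,i]=0$ for $i,j\in I$, the right action $\rho\colon T\to\mathfrak{gl}(I)$ factors through $\bar S$, and by axiom $(1)$ the induced map $\bar S\to\mathfrak{gl}(I)$ is an anti-homomorphism, so $I$ is a module over the semisimple Lie algebra $\bar S$. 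Choosing a linear section $\sigma\colon\bar S\to T$ of $\pi$ and setting $f(x,y):=[\sigma x,\sigma y]-\sigma[x,y]\in I$, the Leibniz identity in $T$ together with $[T,I]=[I,I]=0$ translates into the statement that $f$ is a Leibniz $2$-cocycle, and replacing $\sigma$ by another section changes $f$ by a coboundary. Consequently the desired $S$ exists if and only if the class of $f$ in the second Leibniz cohomology group $HL^2(\bar S,I)$ vanishes.

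It remains to prove $HL^2(\bar S,I)=0$ for the semisimple Lie algebra $\bar S$ and this module $I$, a Leibniz analogue of Whitehead's second lemma. Because the left action is trivial, a Leibniz $2$-cocycle decomposes into an antisymmetric and a symmetric part. The antisymmetric part is an ordinary Lie $2$-cocycle of $\bar S$ with coefficients in the $\bar S$-module $I$, hence a coboundary by the classical Whitehead lemma ($H^2_{\mathrm{Lie}}(\bar S,I)=0$), so after correcting $\sigma$ we may assume $f$ is symmetric. To remove the symmetric part, use Weyl's theorem to reduce to irreducible $I$, and treat separately the case where $\bar S$ acts trivially on $I$ and the case where it does not, writing $f$ in each as a symmetric coboundary with the help of $\bar S=[\bar S,\bar S]$ and the vanishing of $H^1_{\mathrm{Lie}}(\bar S,M)$ and $H^2_{\mathrm{Lie}}(\bar S,M)$ for all finite-dimensional modules $M$. (Equivalently one may run an induction on $\dim I$: a minimal $\bar S$-submodule $I_0\subseteq I$ is an ideal of $T$ since $[T,I]=0$, so splitting $T/I_0$ by induction and pulling back the complement reduces to irreducible $I$.)

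The main obstacle is precisely this last step, the vanishing $HL^2(\bar S,I)=0$: the antisymmetric part of the cocycle is disposed of exactly as in the Lie case, but the symmetric part has no Lie-theoretic counterpart and must be killed by hand. What makes this possible is the semisimplicity of $\bar S$, which forces $\bar S=[\bar S,\bar S]$, complete reducibility of $I$ (Weyl), and the vanishing of the first and second Lie cohomology of $\bar S$ with arbitrary finite-dimensional coefficients (Whitehead). Once the class of $f$ is shown to vanish, the section $\sigma$ can be corrected to a homomorphism, its image $S$ is the required complement of $I$ in $T$, and $S\cong\bar S$ shows that $S$ is a semisimple Lie subalgebra of $L$ with $L=S\,\dot{+}\,R$, completing the proof.
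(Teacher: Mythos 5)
Your first two reductions are fine: applying classical Levi to $L/I$, setting $T:=\pi^{-1}(\bar S)$, checking $T\cap R=I$, observing $[L,I]=0$, and translating the problem into splitting the abelian extension $0\to I\to T\to\bar S\to 0$ with trivial left action is all correct, and the splitting is indeed governed by a class in $HL^2(\bar S,I)$. The gap is in the last step, which is the only place where anything beyond classical Lie theory happens. With trivial left action, the Leibniz identity in $T$ forces the cocycle identity
\[
f([x,y],z)=f([x,z],y)+f(x,[y,z])+f(x,z)\cdot y-f(x,y)\cdot z ,
\]
and this identity does not respect the decomposition of $f$ into symmetric and antisymmetric parts: you give no argument that the two parts are separately cocycles, and there is none in general. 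Worse, even for an antisymmetric $f$ this condition is \emph{not} the Chevalley--Eilenberg cocycle condition for the associated Lie module $x\cdot m:=-m\cdot x$; the two differ by the term $f(y,z)\cdot x$, because the Lie extension bracket contains a summand $-b\cdot x$ which the Leibniz bracket (trivial left action) lacks. So ``the antisymmetric part is an ordinary Lie $2$-cocycle, hence killed by Whitehead'' is unjustified. Similarly, a Leibniz coboundary here is $g(x)\cdot y-g([x,y])$, not $g(x)\cdot y-g(y)\cdot x-g([x,y])$, and it has both a symmetric and an antisymmetric component, so ``after correcting $\sigma$ we may assume $f$ symmetric'' does not follow; and the disposal of the symmetric part is only gestured at. Since the vanishing $HL^2(\bar S,I)=0$ for this antisymmetric bimodule \emph{is} the whole content of Levi's theorem beyond the classical one, the heart of the proof is missing.

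Note also that the paper itself gives no proof: the statement is quoted from Barnes \cite{Barnes1}. From your reduction there is a short finish that avoids Leibniz cohomology altogether and is in the spirit of the cited proof (classical Levi plus Weyl). Since $[T,I]=0$, the assignment $t\ast\bar s:=[t,\sigma(\bar s)]$ is independent of the section and, by the Leibniz identity, makes the whole of $T$ a finite-dimensional right module over the semisimple Lie algebra $\bar S\cong T/I$, with $I$ a submodule. By Weyl's theorem choose an $\bar S$-submodule complement $W$, so $T=W\oplus I$. Then $W$ is automatically a subalgebra: for $w,w'\in W$ one has $[w,w']=[w,\sigma\pi(w')]=w\ast\pi(w')\in W$, again because left multiplication kills $I$. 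Hence $\pi|_W\colon W\to\bar S$ is an algebra isomorphism, $W$ is a semisimple Lie subalgebra, and $L=W\dot{+}R$ as required. If you prefer to keep the cohomological formulation, you must actually prove $HL^2(\bar S,I)=0$ for antisymmetric coefficients within the correct Leibniz complex; the symmetric/antisymmetric splitting you describe is not a valid route to it.
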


Using this Theorem we get the following:

\begin{cor}
Let $L$ be a semisimple Leibniz algebra. Then $L=S\dot{+}I$, where $I$ is
the Leibniz kernel, $S\leq L$ and $S$ is a semisimple Lie algebra.
\end{cor}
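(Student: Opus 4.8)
The plan is to apply the Levi Theorem for Leibniz algebras directly to a semisimple Leibniz algebra $L$ and then identify the radical $R$ with the Leibniz kernel $I$. First I would recall that, by definition, $L$ is semisimple precisely when its maximal solvable ideal is $I$; that is, $R = I$. Since $\mathrm{char}(K) = 0$, the Levi Theorem applies: there is a semisimple subalgebra $S \leq L$ with $L = S \dotplus R$ and $S \cap R = 0$, and this $S$ is a semisimple Lie algebra. Substituting $R = I$ gives exactly $L = S \dotplus I$ with $S$ a semisimple Lie algebra that is a subalgebra of $L$, which is the assertion.

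The only point that requires a word of justification is the identification $R = I$. I would argue as follows: the Leibniz kernel $I$ is always a solvable ideal (indeed $I^{[2]} = [I,I] = 0$, as noted in the excerpt), so $I \subseteq R$ by maximality of the radical. Conversely, if $L$ is semisimple then by definition its maximal solvable ideal is $I$, so $R \subseteq I$; hence $R = I$. One should also remark (as the excerpt already does) that semisimplicity of $L$ is equivalent to $L/I$ being a semisimple Lie algebra, which is consistent with $S \cong L/I$ being semisimple under the decomposition.

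The main obstacle, such as it is, is essentially bookkeeping rather than a genuine difficulty: one must be careful that the notion of ``solvable radical'' used in the statement of the Leibniz Levi Theorem coincides with the maximal solvable ideal $R$ whose existence for finite-dimensional $L$ was recorded in the earlier corollary, and that ``semisimple subalgebra'' in that theorem is exactly what makes $S$ a semisimple Lie algebra. Once these identifications are in place, the corollary is immediate. I would therefore present the proof in two short steps: (i) observe $R = I$ from the definition of semisimplicity together with solvability of $I$; (ii) invoke the Levi Theorem and substitute.

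\begin{proof}
Since $L$ is finite dimensional, it has a maximal solvable ideal $R$, its radical. The Leibniz kernel $I$ is a solvable ideal of $L$ (indeed $I^{[2]} = [I,I] = 0$), so $I \subseteq R$. On the other hand, $L$ being semisimple means by definition that its maximal solvable ideal is $I$, i.e.\ $R = I$. Now apply the Levi Theorem (valid since $\mathrm{char}(K) = 0$): there is a semisimple subalgebra $S \leq L$ with $L = S \dot{+} R$ and $S \cap R = 0$, and $S$ is a semisimple Lie algebra. Substituting $R = I$ yields $L = S \dot{+} I$ with $S \leq L$ a semisimple Lie algebra, as claimed.
\end{proof}
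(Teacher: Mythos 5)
Your proof is correct and follows essentially the same route as the paper: identify $R=I$ from the definition of semisimplicity (with the observation that $I$ is solvable giving $I\subseteq R$) and then apply the Leibniz Levi Theorem. The paper additionally notes $S\cong L/I$ to justify semisimplicity of $S$, but since the quoted Levi Theorem already asserts $S$ is a semisimple Lie algebra, your substitution argument is complete as written.
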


Indeed, as $L$ is semisimple, $R=I$. Then $L=S\dot{+}R$ and $S \cong
L/I$, which means $S$ is a semisimple Lie algebra.

\section{Irreducible representations of semisimple Leibniz algebras}

Let us start with an example.

\subsection{Leibniz representations of the algebra $\mathfrak{sl}_2$} 

Computing Leibniz representations of the Lie algebra $\mathfrak{sl}_2$ is
straightforward.

Let $V$ be a finite dimensional complex vector space and denote
$m+1:=dim_{\mathbb{C}}(V)$. Let
$\rho,\lambda:\mathfrak{sl}_2\rightarrow \mathfrak{gl}(V)$ a Leibniz
representation of the algebra $\mathfrak{sl}_2$. By the multiplication
table the necessary conditions are the following:
 
\begin{enumerate}
\item $\rho_h=\rho_f\rho_e-\rho_e\rho_f$
\item $2\rho_e=\rho_h\rho_e-\rho_e\rho_h$
\item $2\rho_f=\rho_f\rho_h-\rho_h\rho_f$
\item $\lambda_h=\rho_f\lambda_e-\lambda_e\rho_f=\lambda_f\rho_e-\rho_e\lambda_f$
\item $\lambda_h=\rho_f\lambda_e+\lambda_e\lambda_f=-\lambda_f\lambda_e-\rho_e\lambda_f$
\item $0=\rho_h\lambda_h-\lambda_h\rho_h=\rho_h\lambda_h+\lambda_h^2$
\item $2\lambda_e=\rho_h\lambda_e-\lambda_e\rho_h=\lambda_h\rho_e-\rho_e\lambda_h$
\item $2\lambda_e=\rho_h\lambda_e+\lambda_e\lambda_h=-\lambda_h\lambda_e-\rho_e\lambda_h$
\item $0=\rho_e\lambda_e-\lambda_e\rho_e=\rho_e\lambda_e+\lambda_e^2$
\item $2\lambda_f=\rho_f\lambda_h-\lambda_h\rho_f=\lambda_f\rho_h-\rho_h\lambda_f$
\item $2\lambda_f=\rho_f\lambda_h+\lambda_h\lambda_f=-\lambda_f\lambda_h-\rho_h\lambda_f$
\item $0=\rho_f\lambda_f-\lambda_f\rho_f=\rho_f\lambda_f+\lambda_f^2$

\end{enumerate}

Let us restrict ourselves to the irreducible case, assuming that  $\rho$
is irreducible. If we concentrate on the first three conditions, we see
that $\rho$ by itself gives a Lie representation of $\mathfrak{sl}_2$,
or more precisely its $(-1)$ multiple. On the other hand, we know that
for every $m$, $\mathfrak{sl}_2$ has a unique (up to equivalence)
$(m+1)$-dimensional irreducible representation, and in an appropriate
basis, we know the matrices
corresponding to the elements $e,f,h$. From this we get that in an
appropriate basis of $V$,  $\rho_e,\rho_f,\rho_h$ has the following
form $(1\leq i,j \leq m+1)$:

\begin{multicols}{2}
\[(\rho_e)_{i,j} = \left\{
\begin{array}{l l}
i(m+1-i), & \quad \mbox{if $j=i+1$}\\
0, & \quad \mbox{if $j\neq i+1$}\\
\end{array} \right. \]

\[(\rho_f)_{i,j} = \left\{
\begin{array}{l l}
-1, & \quad \mbox{if $j=i-1$}\\
0, & \quad \mbox{if $j\neq i-1$}\\
\end{array} \right. \]

\end{multicols}

\begin{multicols}{2}
\[(\rho_h)_{i,j} = \left\{
\begin{array}{l l}
m+2-2i, & \quad \mbox{if $j=i$}\\
0, & \quad \mbox{if $j\neq i$}\\
\end{array} \right. \]

\end{multicols}

In matrix form:

$\rho_e=\begin{pmatrix}
0 & m & 0 & 0 & \cdots & 0 & 0\\
0 & 0 & 2(m-1) & 0 & \cdots & 0 & 0 \\
0 & 0 & 0 & 3(m-2) & \cdots & 0 & 0\\
\vdots & \vdots & \vdots & \ddots & \ddots & \vdots & \vdots\\
0 & 0 & 0 & \cdots & 0 & 2(m-1) & 0\\
0 & 0 & 0 & \cdots & 0 & 0 & m\\
0 & 0 & 0 & \cdots & 0 & 0 & 0
\end{pmatrix}$

$\rho_f=\begin{pmatrix}
0 & 0 & 0 & \cdots & 0 & 0 & 0\\
-1 & 0 & 0 & \cdots & 0 & 0 & 0\\
0 & -1 & 0 & \cdots & 0 & 0 & 0\\
0 & 0 & -1 & \ddots & \vdots & \vdots & \vdots\\
\vdots & \vdots & \vdots & \ddots & 0 & 0 & 0\\
0 & 0 & 0 & \cdots & -1 & 0 & 0\\ 
0 & 0 & 0 & \cdots & 0 & -1 & 0
\end{pmatrix}$

$\rho_h=\begin{pmatrix}
m & 0 & 0 & \cdots & 0\\
0 & m-2 & 0 & \cdots & 0\\
0 & 0 & m-4 & \cdots & 0\\
\vdots & \vdots & \vdots & \ddots & \vdots\\
0 & 0 & 0 & \cdots & -m
\end{pmatrix}$

Let $(\lambda_f)_{i,j}$ be the element of the $i$-th row
and $j$-th column of $\lambda_f$. Easy to compute that
$(\lambda_f\rho_h-\rho_h\lambda_f)_{i,j}=2(i-j)(\lambda_f)_{i,j}\stackrel{10.}{=}2(\lambda_f)_{i,j}$,
from where we get $(\lambda_f)_{i,j}=0$, if $j\neq i-1$. Also

\[(\lambda_f\rho_f)_{i,j} = \left\{
\begin{array}{l l}
-(\lambda_f)_{i,i-1}, & \quad \mbox{if $3\leq i\leq m+1$ and $j=i-2$}\\
0 & \quad \mbox{otherwise}\\
\end{array} \right. \]

\[(\rho_f\lambda_f)_{i,j} = \left\{
\begin{array}{l l}
-(\lambda_f)_{i-1,i-2}, & \quad \mbox{if $3\leq i\leq m+1$ and $j=i-2$}\\
0 & \quad \mbox{otherwise}\\
\end{array} \right. \]

By equation (12):\\
$(\lambda_f\rho_f)_{i,j}=(\rho_f\lambda_f)_{i,j}$, from where
$(\lambda_f)_{i,i-1}=(\lambda_f)_{j,j-1}$ $(2\leq i,j\leq m+1)$. That
means $\lambda_f=a\rho_f$. By the other term of equation (12),
$0=\rho_f\lambda_f+\lambda_f^2=(a+a^2)\rho_f$, and as $\rho_f\neq 0$,
we get $a+a^2=0$. That means $a=0$ or $a=-1$.

\begin{enumerate}
\item If $a=0$, then $\lambda_f=0$, so by equation (4) we get $\lambda_h=0$,
and by (7) we get $\lambda_e=0$.

\item If $a=-1$, so $\lambda_f=-\rho_f$, then
$\lambda_h\stackrel{4.}{=}\lambda_f\rho_e-\rho_e\lambda_f=\rho_e\rho_f-\rho_f\rho_e\stackrel{1.}{=}-\rho_h$.
Also
$\lambda_e\stackrel{7.}{=}\frac{1}{2}(\lambda_h\rho_e-\rho_e\lambda_h)=\frac{1}{2}(\rho_e\rho_h-\rho_h\rho_e)\stackrel{2.}{=}-\rho_e$.
We get that $\lambda=-\rho$.
\end{enumerate}

Summarizing: for every $m\in\mathbb{N}$, the algebra $\mathfrak{sl}_2$
has - up to equivalence -  exactly two irreducible Leibniz representations of
dimension $m+1$, each of these are Lie representations in the previous
sense. The two Leibniz-representations are as follows.
\begin{enumerate}
\item In appropriate basis, $\,\lambda =0$, and for  $\,1\leq i,j\leq m+1$,  
\[(\rho_e)_{i,j} = \left\{
\begin{array}{l l}
i(m+1-i), & \quad \mbox{if $j=i+1$}\\
0, & \quad \mbox{if $j\neq i+1$}\\
\end{array} \right., \]
\[(\rho_f)_{i,j} = \left\{
\begin{array}{l l}
-1, & \quad \mbox{if $j=i-1$}\\
0, & \quad \mbox{if $j\neq i-1$}\\
\end{array} \right., \]
\[(\rho_h)_{i,j} = \left\{
\begin{array}{l l}
m+2-2i, & \quad \mbox{if $j=i$}\\
0, & \quad \mbox{if $j\neq i$}\\
\end{array} \right.. \]

\item In appropriate basis, $\lambda=-\rho$, and for $(1\leq i,j\leq
m+1)$,
\[(\rho_e)_{i,j} = \left\{
\begin{array}{l l}
i(m+1-i), & \quad \mbox{if $j=i+1$}\\
0, & \quad \mbox{if $j\neq i+1$}\\
\end{array} \right. \]
\[(\rho_f)_{i,j} = \left\{
\begin{array}{l l}
-1, & \quad \mbox{if $j=i-1$}\\
0, & \quad \mbox{if $j\neq i-1$}\\
\end{array} \right. \]
\[(\rho_h)_{i,j} = \left\{
\begin{array}{l l}
m+2-2i, & \quad \mbox{if $j=i$}\\
0, & \quad \mbox{if $j\neq i$}\\
\end{array} \right. \]
\end{enumerate}

These two representations are obviously not equaivalent to each other.

We know that the extensions of dimension at least 5 of 
$\mathfrak{sl}_2$ can be obtained as follows.

Let $L$ be a simple $n$-dimensional Leibniz algebra $(n\geq5)$, for
which the Lie factor $L/I$ is isomorphic to
$\mathfrak{sl}_2$. We know that in this case there exists a basis
$\{e,f,h,x_0,x_1,\dots,x_{n-4}\}$ of $L$, in which the multiplication
table is as follows: (\cite{ORT})
$$\begin{array}{l}
[x_k,h]=(n-4-2k)x_k,\ (0\leq k\leq n-4)\\{}
[x_k,f]=x_{k+1},\ (0\leq k\leq n-5)\\{}
[x_k,e]=k(k+3-n)x_{k-1},\ (1\leq k\leq n-4)\\{}
[e,h]=-[h,e]=2e,\ [h,f]=-[f,h]=2f,\\{}
[e,f]=-[f,e]=h
\end{array}$$

It is easy to compute its representations. Let
$dim_{\mathbb{C}}(V)=m+1$, and search for possible homomorphisms
$\rho,\lambda:\rightarrow \mathfrak{gl}(V)$. We know that a
representation can be restricted to $\mathfrak{sl}_2$, and the
restriction of $\rho$ is irreducible. This way $\rho_e,\rho_f,\rho_h$
and $\lambda_e,\lambda_f,\lambda_h$ are given by the previous rules. We
would like to determine the images of the elements $x_i$. We can
distinguish two cases:

\begin{enumerate}
\item $n$ is odd. By the equality
$(n-4)\rho_{x_0}=\rho_{[x_0,h]}=\rho_h\rho_{x_0}-\rho_{x_0}\rho_h$ we
get 

\begin{flalign*}
(n-2)(\rho_{x_0})_{i,j} & =  2(j-i)(\rho_{x_0})_{i,j}\\
(n-2+2j-2i)(\rho_{x_0})_{i,j}& =  0.
\end{flalign*}

As $n$ is odd, $(n-2+2i-2j)\neq 0$, so $(\rho_{x_0})_{i,j}=0$ for
every pair $i,j$. We get $\rho_{x_0}=0$.\\
Using induction, we can see that $\rho_{x_k}=0$. For $k=0$ we have
this, and using the inductional assumption we get 

\begin{flalign*}
\rho_{x_{k+1}}=\rho_{[x_k,f]}=\rho_f\rho_{x_k}-\rho_{x_k}\rho_f=0-0=0.
\end{flalign*}

Apply now the above argument, replacing $\rho_{x_k}$ by
$\lambda_{x_k}$, and use $\lambda_{[x,y]}=\rho_y\lambda_x-\lambda_x\rho_y$.\\
We get that in this case $\rho_{x_k}=\lambda_{x_k}=0$, if $0\leq k\leq n-4$.

\item $n$ is even. Then let $n-4=2s$. In this case $[x_s,h]=(n-4-2s)x_s=0$.\\
From here by $0=\rho_{[x_s,h]}=\rho_h\rho_{x_s}-\rho_{x_s}\rho_h$ we
get that $\rho_{x_s}$ must be diagonal. Denote

\centerline{$(\rho_{x_s})_{i,i}=b_i$, $(1\leq i\leq m+1)$.}

On the other hand,
\begin{flalign*}
-s(s+1)\rho_{x_{s-1}}=\rho_{[x_s,e]}=\rho_e\rho_{x_s}-\rho_{x_s}\rho_e,
\end{flalign*}

which gives $(\rho_{x_{s-1}})_{i,j}=0$, if $j\neq i+1$, and

\begin{flalign*}
a_{i,i+1}=(\rho_{x_{s-1}})_{i,i+1}=\frac{(b_{i+1}-b_i)i(m+1-i)}{s(s+1)},
\end{flalign*}

if $1\leq i\leq m$. As $[x_{s-1},x_l]=0$, we get

\centerline{$0=\rho_{x_s}\rho_{x_{s-1}}-\rho_{x_{s-1}}\rho_{x_s}$,}

which means $(b_{i+1}-b_i)a_{i,i+1}=0$, if $1\leq i\leq m$.\\
So

\begin{flalign*}
0=(b_{i+1}-b_i)a_{i,i+1}=(b_{i+1}-b_i)\frac{(b_{i+1}-b_i)i(m+1-i)}{s(s+1)},
\end{flalign*}

from where $b_1=b_2=\dots=b_{m+1}$, and so $\rho_{x_{s-1}}=0$. For
$k\geq s$, using the product $[x_{k-1},f]=x_k$, by induction we get $\rho_{x_k}=0$.\\
For $k\leq s-1$ using the product $[x_{k},e]=k(k+3-n)x_{k-1}$ we get by
induction, that $k(k+3-n)\rho_{x_{k-1}}=0$, and as $1\leq k\leq s-1\leq
n-4$, so $k(k+3-n)\neq 0$, which means $\rho_{x_{k-1}}=0$.

We showed for every $k$ $(0\leq k \leq n-4)$ that $\rho_{x_k}=0$. 

For defining the values of $\lambda$, consider the expression
$0=\lambda_{[x_s,h]}=\rho_h\lambda_{x_s}-\lambda_{x_s}\rho_h$ . We get
that $\lambda_{x_s}$ is diagonal. Denote

\centerline{$(\lambda_{x_s})_{i,i}=c_i$, $(1\leq i\leq m+1)$.}

On the other hand,
\begin{flalign*}
-s(s+1)\lambda_{x_{s-1}}=\lambda_{[x_s,e]}=\rho_e\lambda_{x_s}-\lambda_{x_s}\rho_e,
\end{flalign*}

from which $(\lambda_{x_{s-1}})_{i,j}=0$, if $j\neq i+1$, and

\begin{flalign*}
d_{i,i+1}=(\lambda_{x_{s-1}})_{i,i+1}=\frac{(c_{i+1}-c_i)i(m+1-i)}{s(s+1)},
\end{flalign*}

if $1\leq i\leq m$. Using $[x_{s-1},x_l]=0$, we get

\begin{flalign*}
 0 &=\rho_{x_s}\lambda_{x_{s-1}}+\lambda_{x_{s-1}}\lambda_{x_s} =\lambda_{x_{s-1}}\lambda_{x_s}=\\
 &=\rho_{x_{s-1}}\lambda_{x_{s}}+\lambda_{x_{s}}\lambda_{x_{s-1}} =\lambda_{x_{s}}\lambda_{x_{s-1}}\\
\end{flalign*}

This means
$0=\lambda_{x_{s-1}}\lambda_{x_s}-\lambda_{x_{s}}\lambda_{x_{s-1}}$ and

\begin{flalign*}
0=(c_{i+1}-c_i)d_{i,i+1}=(c_{i+1}-c_i)\frac{(c_{i+1}-c_i)i(m+1-i)}{s(s+1)}.
\end{flalign*}

With the previous argument we get $\lambda_{x_k}=0$, if $0\leq k\leq n-4$.
\end{enumerate}

In both cases the final result is that for the elements
$\{x_0,\dots,x_{n-4}\}$, $\rho$ and $\lambda$ is zero. This means that
the Leibniz algebra $L$ has for every $m$ two types of
$(m+1)$-dimensional Leibniz representations, such that $\rho$,
restricted to the subalgebra $\mathfrak{sl}_2$ is irreducible. We get these two representations from representations of
$\mathfrak{sl}_2$ by chosing $\rho|_I=\lambda|_I=0$ (here $I$ denotes
the Leibniz kernel (generated by the elements $x_k^2$).

\subsection{General case}
\smallskip

Using the results from the previous section, consider irreducible
representations of a semisimple Leibniz algebra $L$. We know that
$L=S+I$ as vector space. As $I$ is generated by the squares of elements
in $L$, for every $x\in I$ there exists $n\in \mathbb{N}$,
$\alpha_1,\dots,\alpha_n\in K$, and $x_1,\dots,x_n\in L$ such that
$x=\alpha_1[x_1,x_1]+\dots+\alpha_n[x_n,x_n]$. Using the linearity of
$\rho$ and the identity $\rho_{[x,y]}=\rho_y\rho_x-\rho_x\rho_y$ we
get:

$\rho_x=\alpha_1\rho_{[x_1,x_1]}+\dots+\alpha_n\rho_{[x_n,x_n]}$

$\rho_{[x_i,x_i]}=\rho_{x_i}\rho_{x_i}-\rho_{x_i}\rho_{x_i}=0$, so
$\rho_x=0$ for every $x\in I$.

We have $L=S+I$ as vector space, so every $y\in L$ can be uniquely
written in the form $y=s+x$, where $s\in S$ and $x\in I$. We get
$\rho_y=\rho_s+\rho_x=\rho_s$, which means $\rho(L)=\rho(S)$. We
assumed that $\rho$ is irreducible, so if a subspace $U$ in $V$ is invariant under every $a\in
\rho(L)$, then $U=0$ or $U=V$. By the above argument, it is satisfied
for a semisimple Leibniz algebra if
and only if the restriction of $\rho$ to $S$, the map
$(-\rho)|_S:S\rightarrow \mathfrak{gl}(V)$, as a representation of the
semisimple Lie algebra $S$, is irreducible.

If this is satisfied, then by \ref{irred_lie} we get $\lambda=-\rho$ or $\lambda=0$. In any case, as
$\rho|_I=0$, we must have $\lambda|_I=0$. Here $(-\rho)|_S$ is an
irreducible representation of the semisimple Lie algebra $S$.

Let us summarize the results.

\begin{thm}
Let $L$ be a semisimple Leibniz algebra over the field $K$, with
$char(K)=0$. Let $V$ be a vectorspace over $K$. Then $L$ can be written
in the form $L=S\dot{+}I$, where $I$ is the Leibniz kernel, $S$ is a
semisimple Lie algebra which is a subalgebra in $L$. Then every
irreducible representation $\rho,\lambda:L\rightarrow \mathfrak{gl}(V)$
of $L$ can be written in the following form:

$\rho|_I=\lambda|_I=0$, $\rho|_S=-\varphi$, where $\varphi:
S\rightarrow \mathfrak{gl}(V)$ is an irreducible representation of the
semisimple Lie algebra $S$, and $\lambda|_S=(-\rho)|_S$ or $\lambda|_S=0$.
\end{thm}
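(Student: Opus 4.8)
The plan is to assemble the statement from pieces already established, with essentially no new computation. First I would invoke the Corollary to Levi's Theorem to write $L=S\dot{+}I$ as a vector-space direct sum, where $I$ is the Leibniz kernel and $S\cong L/I$ is a semisimple Lie subalgebra. Next I would show that $\rho$ annihilates $I$: since $I=\mathrm{span}\{x^2\mid x\in L\}$, any $u\in I$ is a finite combination $\sum_i\alpha_i[x_i,x_i]$, and property $(1)$ of a Leibniz representation gives $\rho_{[x_i,x_i]}=\rho_{x_i}\rho_{x_i}-\rho_{x_i}\rho_{x_i}=0$; by linearity of $\rho$ we get $\rho_u=0$. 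Hence for $y=s+u$ with $s\in S$, $u\in I$ we have $\rho_y=\rho_s$, i.e. $\rho(L)=\rho(S)$.

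Then I would translate irreducibility. A subspace $U\le V$ is invariant under $\rho(L)$ if and only if it is invariant under $\rho(S)$, by the previous step. Moreover $-\rho$ is a Lie representation of $L$, since property $(1)$ gives $(-\rho)_{[x,y]}=[(-\rho)_x,(-\rho)_y]$, so its restriction $\varphi:=(-\rho)|_S$ is a Lie representation of $S$; its $S$-invariant subspaces are exactly the $\rho(S)$-invariant ones, hence the $\rho(L)$-invariant ones. By the Corollary to Theorem \ref{irred_lie}, the Leibniz representation $(\rho,\lambda)$ is irreducible precisely when $\rho$ is irreducible, so under the hypothesis of the theorem $\varphi$ is an irreducible representation of the semisimple Lie algebra $S$, and $\rho|_S=-\varphi$ by definition of $\varphi$.

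Finally I would pin down $\lambda$. Applying Theorem \ref{irred_lie} to the irreducible Leibniz representation $(\rho,\lambda)$ yields the dichotomy $\lambda=-\rho$ or $\lambda=0$; restricting to $S$ gives $\lambda|_S=(-\rho)|_S$ or $\lambda|_S=0$. In the first branch $\lambda|_I=-\rho|_I=0$ by the second step, and in the second branch $\lambda|_I=0$ trivially, so in every case $\lambda|_I=0$. Collecting the results, $\rho|_I=\lambda|_I=0$, $\rho|_S=-\varphi$ with $\varphi$ an irreducible representation of $S$, and $\lambda|_S=(-\rho)|_S$ or $\lambda|_S=0$, which is exactly the claim. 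I do not expect a genuine obstacle; the only point worth stating carefully rather than leaving to the reader is that $\lambda|_I=0$ holds in \emph{both} branches of the dichotomy of Theorem \ref{irred_lie}, and that the Corollary to that theorem is what lets us identify irreducibility of $(\rho,\lambda)$ over $L$ with irreducibility of $\varphi$ over $S$.
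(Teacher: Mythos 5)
Your proposal is correct and follows essentially the same route as the paper: the Levi decomposition $L=S\dot{+}I$, the vanishing of $\rho$ on $I$ via the squares generating the Leibniz kernel and property $(1)$, the identification $\rho(L)=\rho(S)$ to transfer irreducibility to $\varphi=(-\rho)|_S$, and the dichotomy $\lambda=-\rho$ or $\lambda=0$ from Theorem \ref{irred_lie} to conclude $\lambda|_I=0$ in both cases. No substantive difference from the paper's argument.
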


That means that every irreducible representation of $L$ can be obtained
from the irreducible representations of $S$, and we do not get new, not
Lie type representations.

\section{Reducible representations of semisimple Leibniz algebras}

For semisimple Lie algebras we know that every representation is
completely  reducible, that means it can be written as a direct sum of
irreducible ones. The question is the following. Can this statement be
carried over to Leibniz algebras?

If a Leibniz representation splits into the sum of irreducible ones, i.e.
{$V=V_1\oplus\dots\oplus V_k$}, then for every $i$, ($1\leq i\leq k$)
$\lambda|_{V_i}=0$ or $\lambda|_{V_i}=(-\rho)|_{V_i}$. As for $x\in I$
($I$ is the Leibniz kernel) $\rho_x=0$, then in both cases,
$\lambda_x|_{V_i}$=0. As $V=V_1\oplus\dots\oplus V_k$, we get $\lambda_x=0$.

That means if a representation decomposes into a direct sum of
irreducible ones, then we must have $\rho|_I=\lambda|_I=0$.

\begin{exa}
Consider the following simple Leibniz algebra: $L=span\{e,f,h,x,y\}$,
the nontrivial Leibniz brackets are:

$$\begin{array}{l}
[e,f]=-[f,e]=h, [e,h]=-[h,e]=2e, [f,h]=-[h,f]=-2f,\\{}
[x,h]=-[y,e]=x, [x,f]=-[y,h]=y.
\end{array}$$

For this algebra $I=span\{x,y\}$. Let $V:=L$, so consider $L$ as $L$-module: for $v,z\in L$,
$\rho_z(v)=[v,z]$ and $\lambda_z(v)=[z,v]$.

It is easy to see from the bracket table that $\lambda|_I\neq 0$ and
the representation can not be decomposed into irreducible ones. Even
more, in this case the only nontrivial invariant subspace is $U=I$,
so we indeed did not find any appropriate decomposition. The fact that
$I$ is the only nontrivial invariant subspace, follows from the
simplicity: the invariant subspaces are in this case exactly the
ideals.
\end{exa}

We get the following 
\begin{thm}
For Leibniz algebras it is not true that a representation of a (semi)simple Leibniz
 algebra always decomposes into the direct sum of irreducible ones.
 \end{thm}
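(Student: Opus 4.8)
The plan is to exhibit a single explicit counterexample and verify that it has the required failure property. The statement asserts the negation of complete reducibility, so it suffices to produce one semisimple (indeed simple) Leibniz algebra $L$ together with one representation that does not split as a direct sum of irreducibles. The natural candidate is the adjoint-type representation of a low-dimensional simple Leibniz algebra whose Leibniz kernel acts nontrivially on the left; the algebra $L=\mathrm{span}\{e,f,h,x,y\}$ from the Example above is exactly such a candidate, with $V:=L$ viewed as an $L$-bimodule via $\rho_z(v)=[v,z]$, $\lambda_z(v)=[z,v]$.

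First I would record the structural criterion already established in the excerpt: if a Leibniz representation $(\rho,\lambda)$ on $V$ decomposes as $V=V_1\oplus\cdots\oplus V_k$ with each summand an irreducible subrepresentation, then on each $V_i$ Theorem~\ref{irred_lie} forces $\lambda|_{V_i}=0$ or $\lambda|_{V_i}=(-\rho)|_{V_i}$; since $\rho_x=0$ for $x\in I$ (the Leibniz kernel acts trivially on the right in any representation, because $\rho_{[u,u]}=\rho_u\rho_u-\rho_u\rho_u=0$), in either case $\lambda_x|_{V_i}=0$, whence $\lambda_x=0$ on all of $V$. So a \emph{necessary} condition for decomposability is $\lambda|_I=0$. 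The strategy is therefore to check that in our example $\lambda|_I\neq 0$: from the bracket table $[x,f]=y\neq 0$, i.e. $\lambda_f(x)=y\neq 0$, so $\lambda|_I\neq 0$ and the representation cannot be a direct sum of irreducibles. To make the example self-contained I would also verify that $L$ really is a simple Leibniz algebra: check the Leibniz identity on basis triples, confirm $I=\mathrm{span}\{x,y\}$ is the Leibniz kernel (squares of elements span it) and that $[L,L]=L\neq I$, and observe that $L/I\cong\mathfrak{sl}_2$ is simple; then enumerate the ideals of $L$ and see that the only ones are $0$, $I$, $L$. For completeness I would note that $V=L$ genuinely fails to be a Lie-type representation precisely because $\lambda|_I\neq 0$, and that the only proper nontrivial $\rho$- and $\lambda$-invariant subspace of $V=L$ is $U=I$ (the invariant subspaces are the ideals, by the module-equals-adjoint identification), so there is visibly no complementary invariant subspace and the sum $I\oplus(\text{something})$ can never exhaust $L$.

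The main obstacle, such as it is, is bookkeeping rather than conceptual: one must be careful that the bracket table as written (which mixes Lie-type antisymmetric relations among $e,f,h$ with one-sided relations involving $x,y$) genuinely defines a Leibniz algebra, i.e. that the Leibniz identity $[[u,v],w]=[[u,w],v]+[u,[v,w]]$ holds for all triples of basis vectors — including the mixed triples like $(x,h,f)$, $(x,f,h)$, $(y,e,h)$ — and that I have not silently assumed anti-symmetry where the relations only specify one side. Once the identity is checked and simplicity is confirmed, the rest is immediate from the necessary condition $\lambda|_I=0$: since $\lambda_f(x)=y\neq 0$, no decomposition into irreducibles exists, and the theorem follows. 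I would present the argument as: (i) the Example provides a simple Leibniz algebra $L$ and a representation $V=L$; (ii) by the displayed necessary condition, decomposability implies $\lambda|_I=0$; (iii) but $\lambda|_I\neq 0$ here; (iv) therefore this representation is a counterexample, proving the statement. \qed
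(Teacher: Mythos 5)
Your proposal is correct and takes essentially the same route as the paper: you invoke the necessary condition $\lambda|_I=0$ for a decomposition into irreducibles (derived from Theorem~\ref{irred_lie} and $\rho|_I=0$), and then use the five-dimensional simple Leibniz algebra with $V:=L$ as the counterexample, exactly as the paper does. One notational slip to fix: $[x,f]=y$ means $\lambda_x(f)=y$ (left multiplication by $x\in I$), not $\lambda_f(x)=y$ (indeed $[f,x]=0$, and $f\notin I$ anyway); read correctly it still witnesses $\lambda|_I\neq 0$, so the argument stands.
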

\medskip
\begin{rem}
Of course, there are cases when the representation can be decomposed into
irreducible ones.
\end{rem}

\begin{exa}
Consider a five-dimensional, \emph{not} irreducible representation of
$\mathfrak{sl}_2$:

$
\rho_e=\left(
\begin{array}{ccc|cc}
0&2&0&0&0\\
0&0&2&0&0\\
0&0&0&0&0\\ \hline
0&0&0&0&1\\
0&0&0&0&0
\end{array}\right)
$
\qquad
$\rho_f=\left(
\begin{array}{ccc|cc}
0&0&0&0&0\\
-1&0&0&0&0\\
0&-1&0&0&0\\ \hline
0&0&0&0&0\\
0&0&0&-1&0
\end{array}\right)
$

$
\rho_h=\left(
\begin{array}{ccc|cc}
2&0&0&0&0\\
0&0&0&0&0\\
0&0&-2&0&0\\ \hline
0&0&0&1&0\\
0&0&0&0&-1
\end{array}\right)
$

Again, starting with the identity (10), we get

$\lambda_f=\left(
\begin{array}{ccc|cc}
0&0&0&0&0\\
a_{21}&0&0&0&0\\
0&a_{32}&0&0&0\\ \hline
0&0&0&0&0\\
0&0&0&a_{54}&0
\end{array}\right)$, so $\lambda_f$ is block diagonal.

From the identity (4), $\lambda_h$, from (7), $\lambda_e$ are also block
diagonal, which means this representation can be written as a direct
sum of irreducible ones. The whole point was, that $\lambda_f$ had
block diagonal form. This happened because $(\lambda_f)_{i,j}$=0, if $(\rho_h)_{j,j}-(\rho_h)_{i,i}\neq 2$.

For $\lambda_f$ not (necessarily) being block diagonal, one needs 
$(\rho_h)_{j,j}-(\rho_h)_{i,i}=2$ for such pairs $\{i,j\}$,
where $i$ and $j$ are indices of rows which do not belong to the
same block along the diagonal.

If for instance the representation has dimension $(2n+1)$, and we try
to decompose it to the sum of two irreducible ones, then one of them
has dimension $2m$, in the appropriate block there are odd numbers
along the main diagonal of $\rho_h$, while the other one has dimension
$(2n-2m+1)$, and in the appropriate block there are even numbers along
the main diagonal of $\rho_h$. So the condition
$(\rho_h)_{j,j}-(\rho_h)_{i,i}=2$ can only be satisfied, if $i$ and
$j$ are indeces of such rows, which belong to the same block. In this
case everything is diagonal, and so the representation can be
decomposed into the sum of irreducible ones.
\end{exa}

\end{document}